\documentclass[12pt]{amsart}
\usepackage[utf8]{inputenc}
\usepackage{amsmath,amssymb,amsthm}
\usepackage{url}
\usepackage{hyperref}
\usepackage{tikz-cd}
\usepackage{stmaryrd}

\usepackage{tikz} 
\usetikzlibrary{external} 
\usepackage{adjustbox}
\definecolor{color0}{HTML}{000000}
\definecolor{color1}{HTML}{000000}
\definecolor{bgColor}{HTML}{ffffff}
\usetikzlibrary {arrows.meta,bending} 

\theoremstyle{definition}
\newtheorem{definition}{Definition}[section]

\newtheorem{question}[definition]{Question}

\newtheorem{claim}[definition]{Claim}
\newtheorem{subclaim}[definition]{Subclaim}

\theoremstyle{plain}
\newtheorem*{theorem*}{Theorem}

\theoremstyle{plain}
\newtheorem{theorem}[definition]{Theorem}
\newtheorem{proposition}[definition]{Proposition}

\newtheorem{corollary}[definition]{Corollary}

\DeclareMathOperator{\cf}{cf}
\DeclareMathOperator{\cof}{cof}

\DeclareMathOperator{\otp}{otp}
\DeclareMathOperator{\dom}{dom}

\DeclareMathOperator{\acc}{acc}

\DeclareMathOperator{\Ord}{Ord}
\DeclareMathOperator{\osc}{osc}

\usepackage{amsmath,amssymb,amsthm, xcolor, enumitem, comment, todonotes}

\title{Strong colorings based on oscillations}
\author{Stevo Todorcevic and Jing Zhang}
\date{}

\address[Todorcevic]{\hfill\break Department of Mathematics, University of Toronto,
Canada
\hfill\break stevo@math.toronto.edu}

\address{\hfill\break Institut de Math\'ematiques de Jussieu, CNRS, Paris,
France
\hfill\break stevo.todorcevic@imj-prg.fr}

\address{\hfill\break Matemati\v{c}ki Institut, SANU, Belgrade, Serbia
\hfill\break stevo.todorcevic@sanu.ac.rs}

\address[Zhang]{\hfill\break Department of Mathematics\\
University of North Texas \\
USA
\hfill\break Jing.Zhang2@unt.edu}

\thanks{
{\it 2010 \emph{MSC}.} 03E02, 03E10 \newline
{\it Key words and phrases.} walks on ordinals, strong colorings, square bracket operations, elementary submodels, oscillation.\newline
{The research on this paper is partially supported by
grants from NSERC(455916),  CNRS(UMR7586) and SFRS(7750027-SMART)
}}

\begin{document}
\maketitle
\begin{abstract}
We show that for any uncountable cardinal $\kappa$, there is a coloring $c: [\kappa]^2\to \omega$ such that $c''A \otimes B = \omega$ for any $A, B\subseteq \kappa$ of order type $\omega_1$ that are stationary in their common supremum. In particular, the stationary version of Erd\H{o}s-Rado theorem and the higher dimensional Friedman's property are both inconsistent. We demonstrate that the theorem is optimal in various ways.
\end{abstract}

\section{Introduction}
Ever since the work by Sierpinski \cite{Sierpinski}, it has been known that Ramsey theorem fails for the first uncountable cardinal $\omega_1$, namely, there is $c: [\omega_1]^2\to 2$ without uncountable homogeneous subset. Later, works by Laver \cite{LaverPartition}, Galvin and Shelah \cite{GalvinShelah}, increase the number of colors to 3 and 4 respectively. These methods are based on the existence of certain uncountable linear orderings, which makes generating more colors difficult. In \cite{TodorcevicFirstOscillation}, Todorcevic came up with the method of oscillation and was able to produce a coloring $c: [\omega_1]^2\to \omega$ such that for any \emph{stationary} $A, B\subseteq \omega_1$, it is the case that $c'' A\otimes B = \omega$. Here $A\otimes B =_{\mathrm{def}}\{(\alpha,\beta): \alpha<\beta, \alpha\in A, \beta\in B\}$. In \cite{TodorcevicWalks87}, the method of minimal walks on ordinals was introduced, which was combined with the previous method of oscillation to produce a coloring $c: [\omega_1]^2\to \omega$ such that for any \emph{uncountable} $A\subseteq \omega_1$, $c''[A]^2 =\omega$. Finally, the rectangular version was proved by Moore \cite{MooreLspace} (see also \cite{RinotTodorcevic} for generalizations to other cardinals), by oscillating lower traces that result from walks on ordinals.

In this note, we apply the method of oscillation to prove the following theorem.

\begin{theorem}\label{theorem: main}
For every uncountable cardinal $\kappa$, there exists a coloring $c: [\kappa]^2\to \omega$ such that  $c'' A \otimes B = \omega$ for any $A, B\subseteq \kappa$ of order type $\omega_1$ that are stationary in $\sup A = \sup B$.
\end{theorem}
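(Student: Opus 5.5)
Fix, for every limit ordinal $\delta<\kappa$ of countable cofinality, a cofinal sequence $C_\delta\subseteq\delta$ of order type $\omega$. For ordinals $\alpha<\beta<\kappa$ we then walk from $\beta$ down to $\alpha$ using these ladders and read off a finite trace; the oscillation of the relevant traces will produce the color. We can restrict attention to countable-cofinality points because of the hypothesis: if $A$ has order type $\omega_1$ and is stationary in $\delta=\sup A$, then $\cf(\delta)=\omega_1$, and stationarity makes the points of $A\cap\acc(A)$ stationary as well. Those points have countable cofinality.

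The coloring will be the classical one, $c(\alpha,\beta)=\osc(\alpha,\beta)$, the number of oscillations between two suitably chosen finite or countable sets attached to $\alpha$ and $\beta$, reduced to $\omega$ in a standard way. The attached sets are, for example, the traces $\mathrm{Tr}(\alpha,\beta)$ of a walk on ordinals along $\langle C_\xi\rangle$, or the ladders themselves read relative to a common point. Concretely, I would set $\osc(\alpha,\beta)=|\{\xi\in C_\alpha: \xi\text{ starts a block where }C_\alpha\text{ and }C_\beta\text{ alternate}\}|$, following Todorcevic's original oscillation map from \cite{TodorcevicFirstOscillation}. The difference here is that the ladders are placed on all countable-cofinality ordinals below $\kappa$. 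The definition is made once and for all, so a single $c$ works for every $\delta$ of cofinality $\omega_1$.

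To check that $c''A\otimes B=\omega$, fix $A,B$ and $\delta=\sup A=\sup B$, and let $\langle\delta_i:i<\omega_1\rangle$ be a continuous cofinal sequence in $\delta$. Pull $A$ and $B$ back to stationary subsets $\bar A,\bar B\subseteq\omega_1$. Next take a continuous $\in$-chain of countable elementary submodels $M_i\prec H(\theta)$ containing $A$, $B$, $\langle C_\xi\rangle$ and $\langle\delta_i\rangle$. By a pressing-down (Fodor) argument, find a stationary set of $i$ such that $\sup(M_i\cap\delta)=\delta_i\in A\cap B$ and the relevant finite data stabilize: the initial segments of $C_{\delta_i}$ below some fixed point, and the first few steps of the walks, are the same for all such $i$. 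Given $n$, the task is to find $\alpha<\beta$ with $\alpha\in A$, $\beta\in B$ such that $C_\alpha$ and $C_\beta$ oscillate exactly $n$ times. The plan is to pick $\beta\in B$ above $M_i$ and use elementarity of $M_i$ to pick $\alpha\in A\cap M_i$ close to $\delta_i$. The ladder $C_\beta$ (or the walk from $\beta$) meets $M_i$ in an end segment pattern controlled by $\delta_i$. Iterating this $n$ times along an increasing sequence of models produces $n$ alternations, and a final choice closes off the count.

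The main obstacle is the step where elementarity is used: the oscillation count has to be exactly $n$ and not larger. The ladders $C_\beta$ of points of $B$ far above $\delta$ interact with $\delta$ in a way that is hard to control, and this is presumably why the common supremum and stationarity hypotheses are both needed. To deal with it, I would first normalize $\beta$ using the walk from $\beta$ down to $\delta$. This walk is finite, so the passage through $\delta$ (and then $C_\delta$) can be fixed by pressing down. After that normalization, the oscillation reduces to comparing $C_\delta$-relative positions, which is essentially the $\omega_1$ case from \cite{TodorcevicFirstOscillation}, and the original argument can then be transferred.
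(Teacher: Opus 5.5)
\textbf{There is a genuine gap, at exactly the point the paper identifies as the main difficulty.}

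For $\delta>\omega_1$, a countable model $M_i$ with $\sup(M_i\cap\delta)=\delta_i$ contains only countably many ordinals below $\delta_i$. So for $\eta<\delta_i$ the finite sets $C_\beta\cap\eta$ and $C_\alpha\cap\eta$ are in general not elements of $M_i$, and elementarity cannot reproduce how they interleave. Todorcevic's $\omega_1$ argument relies on $M\cap\omega_1$ being an initial segment, and that fails here.

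Your remedy is to stabilize ``the initial segments of $C_{\delta_i}$ below some fixed point'' by pressing down, and this does not work. Fodor's lemma stabilizes only regressive, countable data, such as an index $j$ with $\min C_{\delta_i}<\delta_j$. The sets $C_{\delta_i}\cap\delta_j$ themselves range over $[\delta_j]^{<\omega}$, which may have cardinality $\aleph_2$ or more. They can be pairwise distinct, with arbitrary mutual interleaving. Your oscillation count has no mechanism for starting above the region the models control. These uncontrolled initial pieces therefore contribute an unknown number of blocks, and you cannot force the count to be exactly $n$.

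The proposed ``normalization by walking from $\beta$ down to $\delta$'' does not help. Every $\beta\in B$ lies below $\delta=\sup B$, so there is no such walk and no points of $B$ ``far above $\delta$''. Separately, $A$ and $B$ may be disjoint, so you cannot ask for stationarily many $\delta_i\in A\cap B$; you need separate models witnessing membership in $A$ and in $B$.

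\textbf{What is missing} is a device, definable from the pair alone, that discards the uncontrolled initial parts. The paper gets this by changing the coloring. It takes $\otp(C_\alpha)=\cf(\alpha)$ and sets $c(\alpha,\beta)=\osc(C_\beta-\Delta',C_\alpha-\Delta')-1$. Here $\Delta'=\Delta'(C_\beta,C_\alpha)$ is the last $\xi\le\max(\alpha\cap C_\beta)$ with $\otp(C_\alpha\cap\xi)=\otp(C_\beta\cap\xi)$.

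Using two models $N_0\in N_1$, the proof arranges the following. Below a point $\lambda_0$, $C_{\bar\beta}$ has larger order type than $C_\beta$. Within the next interval $J_1$, $C_\beta$ overtakes it and stays ahead. So the two order-type functions agree for the last time at a point $\chi\in J_1$ just above $\lambda_0$. Everything below $\lambda_0$ is then cut off, whatever its interleaving. Only after that does reflection through $n$ models yield exactly $n+1$ blocks.

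Previously, removing initial segments required a third coordinate, as in \cite{Todorcevic3D}. Without some ingredient of this kind, your plan amounts to asserting that the $\omega_1$ proof transfers, and that is the step that fails.
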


Recall that an instance of Erd\H{o}s-Rado theorem  \cite{ErdosRado} gives that if $\kappa=(2^\omega)^+$, then for any $c: [\kappa]^2\to \omega$, there exists $A\subseteq \kappa$ of order type $\omega_1$ such that $c\restriction [A]^2$ is constant. Theorem \ref{theorem: main} shows that it is inconsistent to strengthen the conclusion requiring $A$ to be stationary in its supremum.

Recall that (a version of) \emph{Friedman's property} states: for any regular $\kappa\geq \omega_2$, for any $c: \kappa \to \omega$, there exists a closed $d$ of order type $\omega_1$ such that $c\restriction d$ is constant. Martin's Maximum \cite{MM} implies that Friedman's property holds. Theorem \ref{theorem: main} implies that the 2-dimensional Friedman's property, namely when the given coloring is $c: [\kappa]^2\to \omega$, fails in a strong way.

Let us briefly discuss how the proof compares with those of the previous theorems. In general, using the method of elementary submodels, we are able to control the pattern of oscillation on final segments of two given sets. This leaves us with a task of removing initial segments whose oscillating behavior cannot be controlled in general. When $\kappa$ is $\omega_1$, such a problem can be easily solved due to the fact that any countable elementary submodel contains an initial segment of $\omega_1$. When $\kappa>\omega_1$, a way to remove initial segments was invented in \cite{Todorcevic3D}, however, at the cost of using a third coordinate.   The main innovation in this note is to come up with a binary function, $\Delta'$ (see Definition \ref{def: deltaprime}), that serves to remove undesirable initial segments in an economic way.

We finish the introduction with some definitions and notations that will be used in the proof. Let $\kappa$ and $\theta$ be two fixed ordinals for the rest of the section.

\begin{definition}
A \emph{$C$-sequence} is of the form $\langle C_\alpha: \alpha\in \Gamma\rangle$ where $\Gamma\subseteq \kappa$ and each $C_\alpha$ is a closed unbounded subset of $\alpha$.
\end{definition}

Fix a $C$-sequence $\vec{C}=\langle C_\alpha: \alpha\in \Gamma\rangle$ where $\Gamma\subseteq \kappa$.
 
\begin{definition}[Definition 8.1.4, \cite{walksbook}]\label{def: stationaryCseq}
We say $\vec{C}$ is \emph{stationary} at $\theta$ if $$\bigcup_{\alpha\in \Gamma} \acc(C_\alpha) \cap \theta$$ is a stationary subset of $\theta$.
\end{definition}

\begin{definition}[Implicitly Chapter 10.3, \cite{walksbook}]\label{def: trivialCseq}
We say $\vec{C}$ is \emph{trivial} at $\theta$ if there exists a club $D\subseteq \theta$ such that for any $\xi<\theta$, there exists $\alpha\in \Gamma$ such that $D\cap \xi \subseteq C_\alpha$.
\end{definition}

\begin{definition}[Chapter 8.1, \cite{walksbook}]
Let $\osc$ be a class function whose domain is the class $\{(D,C): D,C\subseteq \Ord\}$ such that given $(D,C)\in \dom(\osc)$, $\osc(D,C)= |D- (\sup D\cap C+1) / \sim|$ where for any $a,b\in D- (\sup D\cap C+1)$, $a\sim b$ iff there is no $c\in C$ such that $a\leq c \leq b$.

\end{definition}

\section{Proof of the main theorem}
Fix a cardinal $\kappa\geq \omega_1$.
	\subsection{Uncountable cofinality}
	We first deal with the case that $\kappa$ has cofinality $\geq \omega_1$.

	\begin{definition}
We say a $C$-sequence $\langle C_\alpha: \alpha\in \Gamma\rangle$ is \emph{stationary} and \emph{strongly non-trivial} at $\theta$ witnessed by $\xi<\cf(\theta)$ if there are stationarily many $\gamma \in \theta$ such that there exists $\alpha\in \Gamma$ with $\gamma\in acc(C_\alpha)\cap \theta$ and $\otp(C_\alpha\cap \gamma)=\xi$.
\end{definition}

	\begin{definition}\label{def: deltaprime}
For $\bar{\beta}\leq \beta$ and two closed unbounded subsets $C\subseteq \bar{\beta}, D\subseteq \beta$, define $\Delta'(D,C)=\max\{\xi \leq   \max (\bar{\beta} \cap D): \otp(C\cap \xi) = \otp(D\cap \xi)\}$. 
\end{definition}

The reason why we consider such a variant is that it ``saves" us one coordinate (compared to the oscillation function defined in \cite{Todorcevic3D}) when we are trying to generate oscillation patterns.

\begin{proposition}\label{prop: deltaprime}
Suppose that $\Gamma, \Sigma\subseteq \kappa$ with $\sup \Gamma=\sup \Sigma=\lambda\leq \kappa$ and $\otp(\Gamma)=\otp(\Sigma)=\cf(\lambda)$ such that
	\begin{itemize}
	\item  $\cf(\lambda)>\omega$ and
	\item  $\langle C_\alpha: \alpha\in \Gamma\rangle$ and $\langle C_\alpha: \alpha\in \Sigma \rangle$ are $C$-sequences that are stationary and strongly non-trivial at $\lambda$ witnessed by $\xi<\cf(\lambda)$.
\end{itemize} Then for any $n\in \omega$, there are $\{\bar{\beta},\beta\}_<\in \Gamma \otimes \Sigma$ such that $$\osc(C_\beta-\Delta'(C_\beta, C_{\bar{\beta}}), C_{\bar{\beta}}-\Delta'(C_\beta, C_{\bar{\beta}}))=n+1.$$
\end{proposition}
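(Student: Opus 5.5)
We fix $n$ and use elementary submodels to build the oscillation pattern on final segments above a common point. The function $\Delta'$ is what lets us discard the initial segments we cannot control.

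\textbf{Setup.} Let $\mu=\cf(\lambda)>\omega$ and fix a continuous $\in$-chain $\langle M_i: i<\mu\rangle$ of elementary submodels of $H(\chi)$. Each $M_i$ has size $<\mu$, contains $\Gamma,\Sigma,\vec C,\xi,\lambda$, and satisfies $M_i\cap\mu\in\mu$. Put $\delta_i=\sup(M_i\cap\lambda)$; then $\{\delta_i\}$ is a club in $\lambda$.

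By strong non-triviality, the set $S_\Gamma$ of $\gamma<\lambda$ with some $\alpha\in\Gamma$, $\gamma\in\acc(C_\alpha)$ and $\otp(C_\alpha\cap\gamma)=\xi$ is stationary. Define $S_\Sigma$ likewise. For each $\gamma\in S_\Gamma$ choose a witness $\alpha_\gamma$, and similarly $\beta_\gamma$ for $\gamma\in S_\Sigma$. We may take $\alpha_\gamma$ minimal in $\Gamma$ above $\gamma$ among witnesses. Since $\otp(\Gamma)=\mu$, witnesses above $\gamma$ exist cofinally, and a pressing-down/Fodor argument on the club of $\delta_i$'s thins $S_\Gamma,S_\Sigma$ to stationary sets of points of the form $\delta_i$.

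\textbf{Key observation.} Suppose $\gamma\in\acc(C_{\bar\beta})\cap\acc(C_\beta)$ with $\otp(C_{\bar\beta}\cap\gamma)=\otp(C_\beta\cap\gamma)=\xi$. Then $\gamma$ is a candidate in the definition of $\Delta'(C_\beta,C_{\bar\beta})$. Non-triviality should force the order types to disagree just above the controlled region, so that $\Delta'$ lands exactly at a point we choose.

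\textbf{Building the pattern.} Choose indices $i_0<i_1<\dots<i_{2n+1}$ so that the $\delta_{i_k}$ lie in both stationary sets. Use elementarity and stationarity inside each interval $(\delta_{i_k},\delta_{i_{k+1}})$: reflecting into $M_{i_{k+1}}$, we pick $\bar\beta\in\Gamma$ and $\beta\in\Sigma$ above $\delta_{i_{2n+1}}$ whose clubs alternately have points in these intervals and avoid each other there. This produces exactly $n+1$ oscillation blocks of $C_\beta$ relative to $C_{\bar\beta}$, beginning above $\delta_{i_0}$.

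The intended mechanism is a step-by-step construction going downward, as in the $\omega_1$ stationary oscillation argument of \cite{TodorcevicFirstOscillation}. At each stage we work in a model $M$ containing the previously chosen parameters and use stationarity of $S_\Gamma\cap\acc$ to find the next accumulation point. The fixed order type $\xi$ guarantees that $\otp(C_{\bar\beta}\cap\delta_{i_0})=\otp(C_\beta\cap\delta_{i_0})=\xi$, so $\delta_{i_0}$ is a candidate for $\Delta'$.

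\textbf{Main obstacle.} The hardest step is proving $\Delta'(C_\beta,C_{\bar\beta})$ equals exactly $\delta_{i_0}$, or at least a point below the first oscillation block and above all uncontrolled behavior. That is, we must show there is no larger $\zeta\le\max(\bar\beta\cap C_\beta)$ where the order types agree again.

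To get this, I would arrange in the construction that above $\delta_{i_0}$ the order types of $C_\beta$ and $C_{\bar\beta}$ diverge permanently. For instance, make one club get an extra point in $(\delta_{i_0},\delta_{i_1})$, using strong non-triviality to choose witnesses of different order types at the next stage; equality of order types can then never be restored. Failing that, I would redefine the target blocks relative to whatever $\Delta'$ turns out to be, using that $\Delta'$ is determined by parameters already in the relevant model $M_{i_0+1}$.
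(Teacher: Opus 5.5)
There is a genuine gap, exactly at the step you call the main obstacle, and the remedy you propose fails.

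\textbf{Why the proposal does not control $\Delta'$.} A point $\delta_{i_0}$ where the order types agree only gives the lower bound $\Delta'(C_\beta,C_{\bar\beta})\geq\delta_{i_0}$, since $\Delta'$ is a maximum. Giving $C_\beta$ an extra point in $(\delta_{i_0},\delta_{i_1})$ does not make the divergence permanent. On the next interval, by design $C_{\bar\beta}$ has points and $C_\beta$ has none. There $\zeta\mapsto\otp(C_\beta\cap\zeta)$ is constant, while $\zeta\mapsto\otp(C_{\bar\beta}\cap\zeta)$ passes through every ordinal up to its value at the end of that block. So if that block of $C_{\bar\beta}$ is long enough, the order types agree again and $\Delta'$ moves into the middle of your pattern, which changes the count. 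Your construction never controls the order types of the blocks, so nothing rules this out.

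Your fallback is circular, because $\Delta'$ depends on $\beta$ and $\bar\beta$, which you choose last. The construction is also not coherent as stated. A witness for $\gamma\in S_\Gamma$ has order type $\xi$ only at $\gamma$ itself. Nothing produces a $\bar\beta$ (let alone also a $\beta$) with $\otp(C_{\bar\beta}\cap\delta_{i_0})=\xi$ \emph{and} prescribed behaviour on the later intervals. Moreover, reflecting into $M_{i_{k+1}}$ yields ordinals below $\delta_{i_{k+1}}$, not above $\delta_{i_{2n+1}}$.

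\textbf{What is missing: a quantitative crossing of order types.} This idea dictates the shape of the construction.
\begin{enumerate}
\item The paper takes $N_0\in N_1$, a template $\bar\beta'\in\Gamma$ with $\lambda_{N_0}\in\acc(C_{\bar\beta'})$ and $\otp(C_{\bar\beta'}\cap\lambda_{N_0})=\xi$, and a fixed $\beta\in\Sigma$ with $\lambda_{N_1}\in\acc(C_\beta)$ and $\otp(C_\beta\cap\lambda_{N_1})=\xi$. The value $\xi$ is used at two \emph{different} points. Since $\xi<N_0\cap\nu$, points of $E$ avoiding these clubs can be chosen.
\item Since $\lambda_{N_0}<\lambda_{N_1}$, we have $\otp(C_\beta\cap\lambda_{N_0})<\xi$. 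Hence the starting point $\lambda_0$ can be chosen with $\otp(C_\beta\cap\lambda_0)<\xi^0_0=\otp(C_{\bar\beta'}\cap\lambda_0)$, so the $\Gamma$-club starts ahead.
\item Only $\bar\beta$ is then reflected, inside $N_1$ (so $\bar\beta<\beta$). The reflected formula $\psi$ fixes $\otp(C_{\bar\beta}\cap I_i)=\xi^0_i$ and $C_{\bar\beta}\subseteq\bar\lambda\cup\bigcup_i I_i$. Hence $\otp(C_{\bar\beta}\cap\tau)=\sum_{i<n}\xi^0_i<\xi$, where $\tau=\max(\bar\beta\cap C_\beta)$.
\item Choosing $k$ with $\otp(C_\beta\cap\lambda^1_k)>\sum_{i<n}\xi^0_i+1$ makes $C_\beta$ overtake all of this within the first block $J_1$, where $C_{\bar\beta}$ has no points.
\item So the largest $\zeta\le\tau$ with equal order types is a crossing point $\chi\in J_1\cap C_\beta$. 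Because $C_{\bar\beta}-\chi=C_{\bar\beta}-\lambda_0$ and $\chi$ lies in the first $C_\beta$-block, the count $n+1$ is unchanged.
\end{enumerate}
Thus $\Delta'$ lands inside the first block, not at a common accumulation point of the two clubs.
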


\begin{proof}
Fix $n\in \omega$ and a large enough regular cardinal $\theta$.
Let $\nu=\cf(\lambda)$.
Let $\vec{M}=\langle M_\alpha: \alpha<\nu\rangle$ be an internally approachable sequence of elementary submodels of $H(\theta)$ of size $<\nu$ containing relevant objects with $M_\alpha\cap \nu \in \nu$ for each $\alpha<\nu$. Let $E=\{\sup M_\alpha\cap \lambda: \alpha<\nu\}$. Then $E$ is a club in $\lambda$. 

Since $\langle C_\alpha: \alpha\in \Gamma\rangle$ and $\langle C_\alpha: \alpha\in \Sigma\rangle$ are stationary and strongly non-trivial at $\lambda$ witnessed by $\xi$, we can find elementary submodels $N_0\in N_1\prec H(({2^\theta})^+)$ containing relevant objects including $\vec{M}, \lambda$, $\xi$ and $\vec{C}$ such that for any $i<2$,
	\begin{itemize}
	\item $|N_i|<\nu$,
	\item $N_i\cap \nu \in \nu$,
	\end{itemize}
as well as 
	\begin{itemize}
	\item $\lambda_{N_0}=_{\mathrm{def}}\sup N_0\cap \lambda\in \acc(C_{\bar{\beta}'})\cap \lambda$ and $\otp(C_{\bar{\beta}'} \cap \lambda_{N_0})=\xi\in N_0\cap \nu$ for some $\bar{\beta}'\in \Gamma$,
	\item $\lambda_{N_1}=_{\mathrm{def}}\sup N_1\cap \lambda\in \acc(C_{\beta})\cap \lambda$ and $\otp(C_\gamma \cap \lambda_{N_1})=\xi\in N_1\cap \nu$ for some $\beta\in \Sigma$,
	\end{itemize}
	 As $E\in N_i$, $\otp(E\cap N_i)=N_i\cap \nu$ and $\otp((C_\beta\cup C_{\bar{\beta}'})\cap N_0)\leq \xi^2 < N_0\cap \nu$, we know that for any $\eta<\lambda_{N_0}$, $E\cap N_0 - \eta \not \subseteq (C_\beta\cup C_{\bar{\beta}'}) \cap \lambda_{N_0}$.
By the order type consideration, there exist increasing sequences $\langle \lambda_i^0\in \lambda_{N_0} : i\in \omega\rangle$,  $\langle \lambda_i^1\in \lambda_{N_1} : i\in \omega\rangle$, $\langle \xi_i^0\in N_0\cap \nu: i\in \omega\rangle$ and $\langle \xi_i^1\in N_1\cap \nu: i\in \omega\rangle$ such that for all $i\in \omega$,
	\begin{itemize} 
	\item $\langle \lambda_i^0\in \lambda_{N_0} : i\in \omega\rangle$ is cofinal in $\lambda_{N_0}$ and $\langle \lambda_i^1\in \lambda_{N_1} : i\in \omega\rangle$ is cofinal in $\lambda_{N_1}$,
	\item $\lambda_0=_{\mathrm{def}}\lambda^0_0=\lambda^1_0\in E\cap N_0 - (C_\beta\cup C_{\bar{\beta}'})$,
	\item  $\lambda_i^0\in E\cap N_0-C_{\bar{\beta}'}$,
	\item  $\lambda_i^1\in E\cap N_1-C_{\beta}$,
	\item $(\lambda_i^0, \lambda_{i+1}^0)\cap C_{\bar{\beta}'}\neq \emptyset$,
	\item $(\lambda_i^1, \lambda_{i+1}^1)\cap C_{\beta}\neq \emptyset$,
	\item $\otp(C_{\bar{\beta}'}\cap \lambda_0^0)=\xi_0^0$,
	\item $\otp(C_{\beta}\cap \lambda_0^1)=\xi_0^1$,
	\item $\otp((\lambda_i^0, \lambda_{i+1}^0)\cap C_{\bar{\beta}'})=\xi_{i+1}^0$, and
	\item $\otp((\lambda_i^1, \lambda_{i+1}^1)\cap C_{\beta})=\xi_{i+1}^1$.
	\end{itemize}
We may assume that $\xi^0_0>\xi^1_0$ since $\otp(C_{\beta}\cap \lambda_{N_0})<\otp(C_\beta\cap \lambda_{N_1})=\xi$. As $\otp(C_{\bar{\beta}'}\cap \lambda_{N_0})=\xi$, by picking $\lambda_0$ sufficiently large below $\lambda_{N_0}$, we can arrange that $\xi^0_0>\xi^1_0$ holds.

For simplicity, let us denote the models witnessing $\lambda_i^0\in E$ as $M'_i$ for $i\in \omega$. We may also assume that $\otp(C_{\bar{\beta}'} \cap \lambda_{N_0})=\xi\in M_0'\cap \nu$. Let $\bar{\lambda}\in M_{0}' \cap \lambda > \sup C_\beta\cap \lambda_0$.
Let $\exists^Q I$ abbreviate: for any $\xi<\lambda$, there exists a closed interval $I \subseteq \lambda$ with $\min I > \xi$. The standard reflection argument using the elementarity of $M_0',\cdots, M'_{n},N$ (see \cite[Lemma 8.1.2]{walksbook} for example) gives that $\exists^Q I_1 \exists^Q I_2 \cdots  \exists^Q I_n \psi(I_1,\cdots, I_n)$ where $\psi(I_1,\cdots, I_n)$ asserts there is some $\bar{\beta}\in \Gamma$ satisfying the following
	\begin{itemize}
	\item $\otp(C_{\bar{\beta}}\cap \bar{\lambda}) =\xi_0^0$,
	\item $\otp(C_{\bar{\beta}}\cap I_i)=\xi_i^0$ for $i=1,2, \cdots, n-1$,
	\item $C_{\bar{\beta}}\subseteq \bar{\lambda} \cup \bigcup_{i=1}^{n} I_i$.
	\end{itemize}
Observe that the parameters involved in the sentence $$\exists^Q I_1 \exists^Q I_2 \cdots  \exists^Q I_n \psi(I_1,\cdots, I_n)$$ are $\Gamma$, $\vec{C}$, $\bar{\lambda}$, $\{\xi_i^0: 1\leq i \leq n-1\}$, which are all in $M_0'$.
	
Let us briefly sketch how the reflection works. For each $1\leq i \leq n$, find some $\bar{\lambda}_i\in M'_i\cap \lambda$ such that $\max C_{\bar{\beta}'}\cap \lambda_i<\bar{\lambda}_i$. Consider the following interval $K_i=[\lambda_{i-1}, \bar{\lambda}_i]$ for $1\leq i < n$ and $K_n=[\lambda_{n-1},\lambda)$. Note that $K_i\in M'_i$ for each $1\leq i\leq n$. Notice that $\psi(K_1,K_2, \cdots, K_n)$ is true witnessed by $\bar{\beta}' \in \Gamma$. By the elementarity of $M'_{n-1}$, we can conclude that $\exists^Q I_n \psi(K_1,\cdots, I_n)$. Repeating the same argument for the sentence $\exists^Q I_n \psi(K_1,\cdots, I_n)$ and $M'_{n-2}$, we get $\exists^Q I_{n-1}\exists^Q I_n \psi(K_1,\cdots, I_{n-1},I_n)$. After repeating this argument $n$ times, we arrive at the conclusion that $\exists^Q I_1 \exists^Q I_2 \cdots  \exists^Q I_n \psi(I_1,\cdots, I_n)$ holds.

Find $k\in \omega$ such that $\otp(C_\beta\cap \lambda^1_k)> \Sigma_{0\leq i\leq n-1} \xi^0_{i}+1$.
Let $J_1 = (\lambda_0, \max C_{\beta}\cap \lambda_k^1+1)$, $J_2 = (\lambda_k^1, \max C_{\beta}\cap \lambda_{k+1}^1+1)$, $\cdots$, \linebreak $J_n = (\lambda_{k+n-2}^1, \max C_{\beta}\cap \lambda_{k+n-1}^1+1)$. Let us denote the models witnessing $\lambda_i^1\in E$ as $\tilde{M}_i$ for $i\in \omega$. Reflecting the statement $$\exists^Q I_1 \exists^Q I_2 \cdots  \exists^Q I_n \psi(I_1,\cdots, I_n)$$ in models $\tilde{M}_k, \cdots, \tilde{M}_{k+n-1}$, we get in $N_1$ intervals $I_1<\cdots<I_n$ and the witnessing $\bar{\beta}\in \Gamma\cap \tilde{M}_{k+n-1}$ satisfying the following:  
\begin{enumerate}
\item $\psi(I_1,\cdots, I_n)$ holds witnessed by $\bar{\beta}$, 
\item $J_1<I_1<\lambda_k^1 <J_2 < I_2<\lambda_{k+1}^1< J_3 < I_3 <\lambda_{k+2}^1 <\cdots < J_n<I_n< \lambda_{k+n-1}^1$.
\end{enumerate}

From here, it is clear that $\osc(C_{\beta}-\lambda_0, C_{\bar{\beta}}-\lambda_0)=n+1$. 

	\begin{claim}\label{claim: computedeltaprime}
	$\Delta'(C_{\beta}, C_{\bar{\beta}})\in J_1-\{\max J_1\}$.
	\end{claim}
	
	\begin{proof}[Proof of the claim]
	Let $\tau = \max \bar{\beta}\cap C_\beta$. Since $\bar{\beta}\in \lambda\cap \tilde{M}_{k+n-1}-\max J_n$, we have that $\tau = \max J_n$.
	Note first that $\Delta'(C_{\beta}, C_{\bar{\beta}})\geq \lambda_0$ since
		\begin{enumerate}
		\item $\otp(C_\beta\cap \lambda_0) = \xi^1_0 < \xi^0_0 =\otp(C_{\bar{\beta}}\cap \lambda_0)$ and 
		\item $\otp(C_\beta\cap \lambda_k^1) = \otp(C_\beta\cap (\max J_1+1))> \xi^0_0 + \xi^0_1 +1 = \otp(C_{\bar{\beta}}\cap \lambda_k^1)$.
		\end{enumerate}
As a result, there exists $\chi\in C_\beta\cap J_1-\{\max J_1\}$ such that $\otp(C_{\beta}\cap \chi)=\otp(C_{\bar{\beta}}\cap \chi)$. The following subclaim finishes the proof.

	\begin{subclaim}
	$\Delta'(C_{\beta},C_{\bar{\beta}})=\chi$.
	\end{subclaim}
It suffices to show that for any $\chi'\in (\chi, \tau]$, it is the case that $\otp(C_\beta\cap \chi')>\otp(C_{\bar{\beta}}\cap \chi')$. If $\chi'\leq \max J_1$, then $$otp(C_{\bar{\beta}}\cap \chi')=\otp(C_{\bar{\beta}}\cap \lambda_0)=\xi^0_0 = \otp(C_{\beta}\cap \chi)<\otp(C_{\beta}\cap \chi').$$ If $\chi'>\max J_1$, then $\otp(C_\beta\cap \chi')>\Sigma_{0\leq i\leq n-1} \xi^0_{i}+1 > \otp(C_{\bar{\beta}}\cap \tau)\geq \otp(C_{\bar{\beta}}\cap \chi')$. This finishes the proof of the subclaim and the claim.
	\end{proof}
To finish the proof, notice that $C_{\bar{\beta}}-\chi = C_{\bar{\beta}}-\lambda_0$ and $J_1\cap (C_{\beta}-\chi)\neq \emptyset$. As a result of the claim above, we have $$\osc(C_{\beta}-\Delta'(C_{\bar{\beta}}, C_{\beta}),C_{\bar{\beta}}-\Delta'(C_{\bar{\beta}}, C_{\beta}))=n+1.$$
\end{proof}

Fix a $C$-sequence $\langle C_\alpha: \alpha\in \kappa\rangle$ such that $\otp(C_\alpha)=\cf(\alpha)$ for any $\alpha<\kappa$. Define $c: [\kappa]^2\to \omega$ such that for any $\{\alpha,\beta\}\in [\kappa]^2$, $$c(\alpha,\beta)= \osc(C_\beta-\Delta'(C_\beta,C_\alpha), C_\alpha-\Delta'(C_\beta,C_\alpha))-1.$$
For any $\Gamma, \Sigma\subseteq \kappa$ of order type $\omega_1$ that are stationary in $\sup \Gamma =\sup \Sigma$, it is easy to see that $\Gamma$ and $\Sigma$ are both stationary and strongly non-trivial at $\sup \Gamma$ witnessed by $\omega$. By Proposition \ref{prop: deltaprime}, for any $n\in \omega$, there are $\alpha<\beta\in \Gamma\otimes \Sigma$ such that $c(\alpha,\beta)=n$, as desired.

\begin{figure}[h]
    \centering
    \hspace{-0.7cm}
   \includegraphics[scale=0.75]{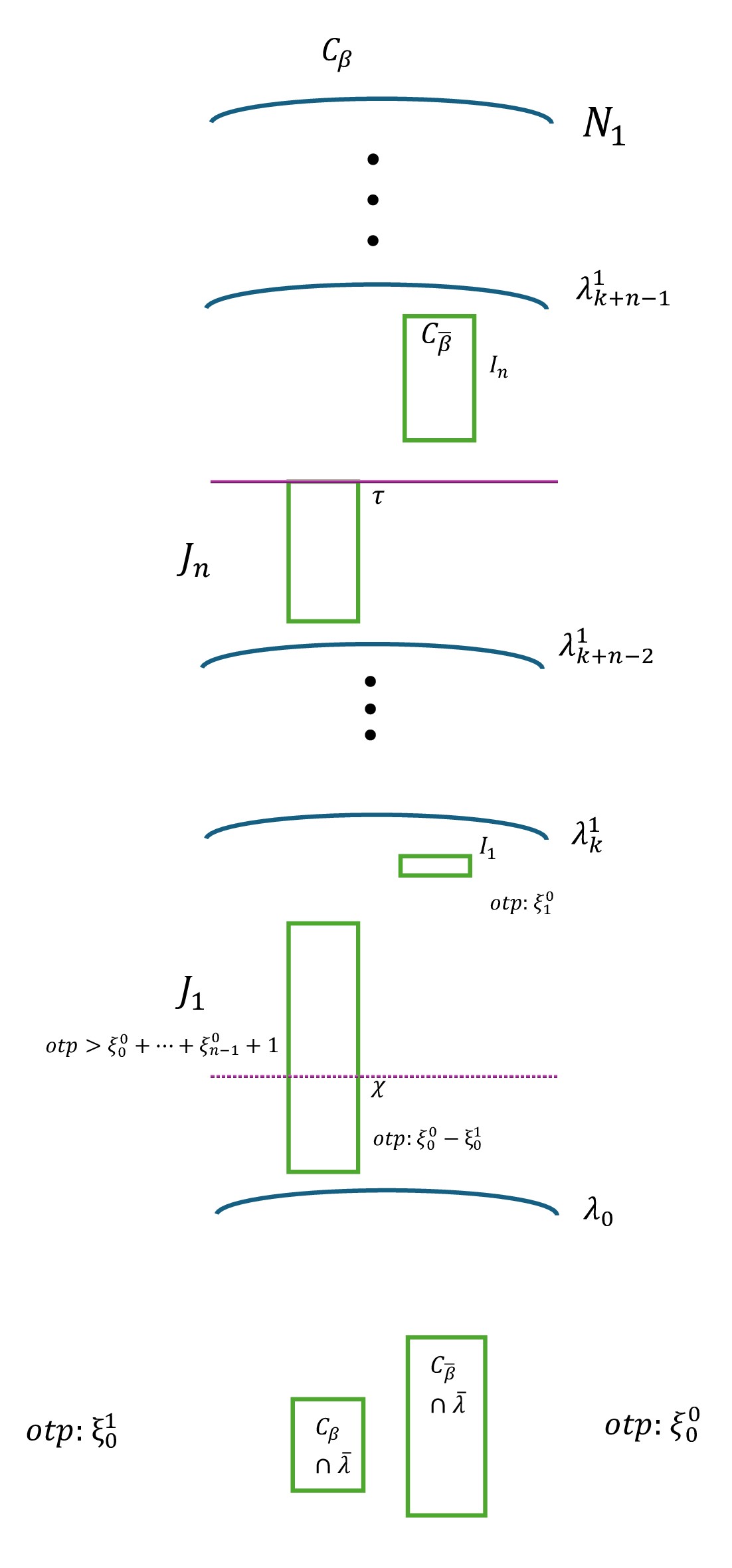}
    \caption{Illustration of the oscillation pattern in the proof of Proposition \ref{prop: deltaprime}.}
    \label{fig:osc}
\end{figure}
	
	\subsection{Countable cofinality}
Suppose $\kappa$ is a singular cardinal of countable cofinality. Fix increasing uncountable regular cardinals $\langle \kappa_i: i\in \omega\rangle$ cofinal in $\kappa$. For each $i\in \omega$, fix $c_i: [\kappa_i-\kappa_{i-1}]^2\to \omega$ witnessing that $\kappa_i\not\to [\mathrm{stat}(\omega_1)]^2_\omega$ (the convention here is that $\kappa_{-1}=0$). Define $c: [\kappa]^2\to \omega$ such that for $(\alpha,\beta)\in [\kappa]^2$, if there exists $i\in \omega$ such that $\alpha,\beta\in \kappa_{i}-\kappa_{i-1}$, then $c(\alpha,\beta)= c_i(\alpha,\beta)$. Otherwise, define $c(\alpha,\beta)=0$. Suppose we are given $A\subseteq \kappa$ such that $A$ is of order type $\omega_1$ that is stationary in its supremum. By the $<\omega_1$-completeness of the non-stationary ideal on $\omega_1$, we can find some $i\in \omega$ and a stationary $A'\subseteq A$ in $\sup A$ such that $A'\subseteq \kappa_i-\kappa_{i-1}$. By the property of $c_i$ and the definition of $c$, we have that $c''[A']^2 = c_i''[A']^2 =\omega$.

\section{Optimality}

In this section, we demonstrate that our theorem is optimal in several ways. 

First we demonstrate that the stationarity requirement is necessary. 

\begin{proposition}\label{prop: stationarity}
If $\kappa=(2^\omega)^+,$ then for any  $c: [\kappa]^2\to \omega$, there are $\Gamma,\Sigma\subseteq \kappa$ of order type $\omega_1$ with $\Gamma$ being stationary in $\sup \Gamma = \sup \Sigma$ such that $c'' \Gamma \otimes \Sigma$ is constant.
\end{proposition}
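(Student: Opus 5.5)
The plan is to run the usual Erd\H{o}s--Rado elementary submodel argument. One ordinal $\beta^*$ above the model fixes a single colour $n$ on a stationary set $\Gamma$. Elementarity then reflects $\beta^*$ down into the model, one countable piece of $\Gamma$ at a time, to produce $\Sigma$. Only $\Gamma$ needs to be stationary, because $\Sigma$ is built by a recursion that interleaves with a club.

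\textbf{Step 1: the model.} Fix $c:[\kappa]^2\to\omega$ and a large regular $\theta$. Build a continuous $\subseteq$-increasing chain $\langle N_\nu:\nu<\omega_1\rangle$ of elementary submodels of $H(\theta)$ containing $c$ and $\kappa$, each of size $2^\omega$.
\begin{itemize}
\item Each successor model $N_{\nu+1}$ is taken closed under $\omega$-sequences; this is possible since $(2^\omega)^\omega=2^\omega$.
\item We require $N_\nu\in N_{\nu+1}$, so that $\sup(N_\nu\cap\kappa)<\sup(N_{\nu+1}\cap\kappa)$.
\end{itemize}
Let $N=\bigcup_\nu N_\nu$. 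Since $|N|=2^\omega<\kappa$ and $\kappa$ is regular, $\delta=N\cap\kappa$ is an ordinal below $\kappa$. Moreover $\delta_\nu:=\sup(N_\nu\cap\kappa)$ gives a continuous increasing sequence cofinal in $\delta$, so $\cf(\delta)=\omega_1$ and $D=\{\delta_\nu:\nu<\omega_1\}$ is a club in $\delta$ of order type $\omega_1$. Every countable subset of $N$ lies in some $N_{\nu+1}$, hence is an element of $N$.

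\textbf{Step 2: fixing the colour.} Fix any $\beta^*\in[\delta,\kappa)$. Partition $D$ according to the value $c(\delta_\nu,\beta^*)$. Since the nonstationary ideal on $\omega_1$ is $\sigma$-complete, there are $n\in\omega$ and a stationary $S\subseteq\omega_1$ with $c(\delta_\nu,\beta^*)=n$ for all $\nu\in S$. Put $\Gamma_0=\{\delta_\nu:\nu\in S\}$.

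\textbf{Step 3: building $\Sigma$.} By recursion on $\nu<\omega_1$ choose $\sigma_\nu<\delta$ as follows.
\begin{itemize}
\item Let $x_\nu=\Gamma_0\cap(\delta_\nu+1)$ and $\eta_\nu=\max(\delta_\nu,\sup_{\mu<\nu}\sigma_\mu)$. Both are in $N$: $x_\nu$ is a countable subset of $N$, and $\eta_\nu<\delta$ because it is a countable supremum below an ordinal of cofinality $\omega_1$.
\item The statement ``there is $\sigma>\eta_\nu$ with $c(\alpha,\sigma)=n$ for all $\alpha\in x_\nu$'' holds in $H(\theta)$, witnessed by $\beta^*$.
\item By elementarity there is such a $\sigma\in N$, so $\sigma<\delta$; let $\sigma_\nu$ be one.
\end{itemize}
Then $\Sigma=\{\sigma_\nu:\nu<\omega_1\}$ is strictly increasing, has order type $\omega_1$, and is cofinal in $\delta$ because $\sigma_\nu>\delta_\nu$.

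\textbf{Step 4: thinning $\Gamma_0$.} The only real obstacle is that some $\sigma_\nu$ with $\nu<\mu$ might lie above $\delta_\mu\in\Gamma_0$, and then $c(\delta_\mu,\sigma_\nu)$ is uncontrolled. To avoid this, let
\[
C=\{\mu<\omega_1 \text{ limit}: \sup_{\nu<\mu}\sigma_\nu=\delta_\mu\}.
\]
This is a club in $\omega_1$, by the usual closing-off argument using the continuity of $\langle\delta_\nu\rangle$ and $\sup_\nu\sigma_\nu=\delta$. Set $\Gamma=\{\delta_\mu:\mu\in S\cap C\}$. Then $\Gamma$ is stationary in $\delta$, has order type $\omega_1$, and $\sup\Gamma=\sup\Sigma=\delta$.

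\textbf{Verification.} Take $\delta_\mu\in\Gamma$ and $\sigma_\nu\in\Sigma$ with $\delta_\mu<\sigma_\nu$.
\begin{itemize}
\item If $\nu<\mu$, then $\sigma_\nu<\sup_{\nu'<\mu}\sigma_{\nu'}=\delta_\mu$ because $\mu\in C$. This contradicts $\delta_\mu<\sigma_\nu$, so this case does not occur.
\item Hence $\nu\geq\mu$, so $\delta_\mu\in x_\nu$, and by construction $c(\delta_\mu,\sigma_\nu)=n$.
\end{itemize}
Therefore $c''\Gamma\otimes\Sigma=\{n\}$, as required.
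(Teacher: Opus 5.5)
Your argument is correct and is essentially the paper's proof. Both use a countably closed model of size $2^\omega$ with $\cf(\delta)=\omega_1$, a stationary set monochromatic against a single ordinal $\geq\delta$, a recursive reflection using countable closure to build a cofinal $\Sigma$, and a club-thinning of the stationary set. The only difference is bookkeeping: you synchronize $\sigma_\nu$ with $\delta_\nu$ and intersect with the club $C$, whereas the paper uses the ``all but countably many'' property of $\Sigma'$ and the increasing function $f$ to pick $\Sigma$.

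One small repair: that $N\cap\kappa$ is an ordinal does not follow from $|N|<\kappa$ alone. It follows from $2^\omega\subseteq N$, which your countable closure gives via $\mathcal{P}(\omega)\subseteq N$.
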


\begin{proof}
Given a coloring $c: [\kappa]^2\to \omega$, fix an elementary submodel $M\prec H(\theta)$ where
	\begin{enumerate}
	\item $\theta$ is a large enough regular cardinal,
	\item $M$ is countably closed and of size $2^\omega$, 
	\item $c\in M,$ and
	\item $\delta=M\cap \kappa\in \kappa\in \cof(\omega_1).$
	\end{enumerate}
 We can find a stationary $A\subseteq \delta$ of order type $\omega_1$ and $i\in \omega$ such that $c'' A\times \{\delta\}=\{i\}$. Since $M$ is countably closed, it is easy to recursively construct a set $\Sigma'\subseteq \delta$ cofinal in $\delta$ of order type $\omega_1$ such that for each $\alpha\in A$, for all but countably many $\eta\in \Sigma'$, it is the case that $c(\alpha,\eta)=i$. Consider the following function $f: A\to \Sigma'$ such that $f(\alpha)$ is the least $\xi \geq \sup_{\beta\in \alpha \cap A}f(\beta)+1$ such that for all $k\geq \xi$ in $\Sigma'$, it is the case that $c(\alpha,k)=i$. By the stationarity of $A$, we can find a stationary subset $\Gamma\subseteq A$ that is closed under the following function: $\alpha\in A\mapsto \min \Sigma'-(f(\alpha)+1)$, in the sense that for any $\alpha\in \Gamma$, for any $\gamma\in \alpha\cap \Gamma$, it is the case that $\min\Sigma'-(f(\gamma)+1)<\alpha$. Let $\Sigma=\{\min \Sigma' - (f(\alpha)+1): \alpha\in \Gamma\}$. Let us check that $c'' \Gamma\otimes \Sigma = \{i\}$. Given $\alpha<\beta\in \Gamma\otimes \Sigma$, by the definition of $\Sigma$, let $\alpha'\in \Gamma$ be such that $\beta=\min \Sigma' - (f(\alpha')+1)$. It suffices to show that $\alpha\leq \alpha'$. Suppose for the sake of contradiction that $\alpha'<\alpha$, by the definition of $\Gamma$, we have that $\beta=\min\Sigma'-(f(\alpha')+1)<\alpha$, contradicting with the fact that $\alpha<\beta$.\end{proof}

Next we show that the requirement that the two sets having the same supremum is also crucial.

\begin{proposition}\label{prop: MM}
Martin's Maximum implies that for any coloring  $c: [\omega_3]^2\to \omega_1$, there exist $\Gamma, \Sigma\subseteq \omega_3$ of order type $\omega_1$ that are closed in their respective suprema such that $c'' \Gamma \otimes \Sigma$ is constant.
\end{proposition}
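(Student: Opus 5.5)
We would use the reflection consequences of Martin's Maximum, namely stationary reflection for $\omega_1$-cofinal points together with the Friedman-type property mentioned in the introduction: MM implies that for any regular $\kappa\geq\omega_2$ and any $f:\kappa\to\omega_1$ restricted to $S^\kappa_\omega$, there is a closed copy of $\omega_1$ on which $f$ is constant. Our plan is to handle the two coordinates one after the other, using the fact that $\Gamma$ and $\Sigma$ may have \emph{different} suprema. First fix $\delta<\omega_3$ of cofinality $\omega_2$, chosen large; it will serve as an upper point for $\Sigma$. Consider the one-dimensional colorings $c(\cdot,\beta)$ for $\beta$ above $\delta$.

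\textbf{Step 1.} For each $\beta\in S^{\omega_3}_{\omega}$ with $\beta>\delta$, apply the Friedman property on $\delta$ (which has cofinality $\omega_2$) to the function $\alpha\mapsto c(\alpha,\beta)$ restricted to a club of $\delta$ of order type $\omega_2$ intersected with countable cofinality points. This produces a closed $d_\beta\subseteq\delta$ of order type $\omega_1$ and a color $i_\beta<\omega_1$ with $c''d_\beta\times\{\beta\}=\{i_\beta\}$. The set $d_\beta$ is a closed countable-type object coded by a subset of $\delta$ of size $\omega_1$. Under MM we have $2^{\omega_1}=\omega_2$, so there are at most $\omega_2^{\omega_1}=\omega_2$ possible pairs $(d_\beta,i_\beta)$ with $d_\beta\subseteq\delta$, once we fix an identification of $\delta$ with $\omega_2$.

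\textbf{Step 2.} By pigeonhole on the regular $\omega_3$, there is a stationary $S\subseteq S^{\omega_3}_\omega-(\delta+1)$ on which $(d_\beta,i_\beta)=(d,i)$ is constant. Now apply the Friedman property, or directly stationary reflection for $S$ (MM gives reflection of stationary subsets of $S^{\omega_3}_\omega$ at points of cofinality $\omega_1$ and, via the standard MM argument, a closed copy of $\omega_1$ inside a stationary set). This yields a closed $\Sigma\subseteq S$ of order type $\omega_1$. Then set $\Gamma=d$. Every $\alpha\in\Gamma$ is below every $\beta\in\Sigma$, since $\Gamma\subseteq\delta<\min\Sigma$. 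Hence $c''\Gamma\otimes\Sigma=\{i\}$, and both sets are closed in their suprema.

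\textbf{Main obstacle.} The delicate point is the cardinal arithmetic and the exact form of the Friedman property needed. We need closed copies of $\omega_1$ inside a prescribed stationary subset of $S^{\kappa}_\omega$ for $\kappa=\omega_2$ and $\kappa=\omega_3$. Here we would cite Foreman--Magidor--Shelah: under MM, every stationary $S\subseteq S^\kappa_\omega$ contains a closed set of order type $\omega_1$. We also need the count of possible $d_\beta$ to stay $<\omega_3$, which uses $2^{\omega_1}=\omega_2$ under MM. If that count failed, we would first shrink to a fixed $d$ by applying the closed-copy property once at $\delta$ to the countably many-to-$\omega_1$ map $\alpha\mapsto\langle c(\alpha,\beta)\rangle$ indexed by an elementary submodel trace, but we expect the pigeonhole approach to suffice.
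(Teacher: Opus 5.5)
Your proposal is correct and follows essentially the same route as the paper: for each $\beta\in S^{\omega_3}_\omega$ above the lower block, find a stationary color class of countable-cofinality points below and a closed $\omega_1$-copy $d_\beta$ inside it; use $2^{\omega_1}=\omega_2$ to freeze $(d_\beta,i_\beta)$ on a stationary set; then take a closed $\omega_1$-copy $\Sigma$ of that set. The only differences are cosmetic. The paper works directly with $\omega_2$ in place of your $\delta$, which is just a special case of your choice. In Step 2 the tool actually needed is the Foreman--Magidor--Shelah closed-copy property rather than mere stationary reflection, as you yourself identify in your final paragraph.
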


\begin{proof}
We will only use the following two consequences of Martin's Maximum \cite{MM}: 
	\begin{enumerate}
	\item for any regular $\kappa\geq \omega_2$, any stationary $S\subseteq \kappa \cap \cof(\omega)$ contains a \emph{closed} subset of order type $\omega_1$.
	\item $2^{\omega_1}=\omega_2$.
	\end{enumerate}
Let $S^2_0$ be $\omega_2 \cap \cof(\omega)$ and $S^3_0$ be $\omega_3\cap \cof(\omega)$. For each $\beta\in S^3_0 - \omega_2$, find $i_\beta\in \omega_1$ such that $A_\beta=\{\gamma\in S^2_0: c(\gamma, \beta)=i_\beta\}$ is stationary. By Martin's Maximum, find a closed $c_\beta \subseteq A_\beta$ of order type $\omega_1$. Since $\omega_2^{\omega_1}= \omega_2$, we can find a closed subset $\Gamma\subseteq \omega_2$ of order type $\omega_1$, $i\in \omega_1$ and a stationary $S\subseteq S^3_0$ such that for any $\beta\in S$, $\Gamma=c_\beta$ and $i=i_\beta$. Apply Martin's Maximum again to find a closed $\Sigma\subseteq S$ of order type $\omega_1$. It is now easy to check that $c''\Gamma \otimes \Sigma =\{i\}$.
\end{proof}

Finally, we show that the number of colors is optimal in terms of results provable in $\mathrm{ZFC}$.

\begin{proposition}\label{prop: statChang}
It is consistent relative to the existence of suitable large cardinals that for any $c: [\omega_2]^2\to \omega_1$, there exists $A\subseteq \omega_2$ of order type $\omega_1$ that is stationary in $\sup A$ such that $|c''[A]^2|=\aleph_0$. 
\end{proposition}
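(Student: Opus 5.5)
We will obtain the model by collapsing a large cardinal so that it becomes $\omega_2$ and an elementary-embedding reflection argument becomes available. The plan is to start with a measurable cardinal $\kappa$, or something slightly stronger if needed. We force with the Levy collapse $\mathbb{P}=\mathrm{Col}(\omega_1,<\kappa)$, so that $\kappa=\omega_2$ in $V[G]$. Fix in $V[G]$ a coloring $c:[\omega_2]^2\to\omega_1$. Take a normal measure on $\kappa$ and its ultrapower embedding $j:V\to M$ with critical point $\kappa$. Then $j(\mathbb{P})\cong \mathbb{P}\ast \mathrm{Col}(\omega_1,[\kappa,j(\kappa)))$. Let $H$ be generic for the tail and lift $j$ to $j:V[G]\to M[G][H]$. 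Since $\mathbb{P}$ is $\kappa$-c.c. and the tail is $\omega_1$-closed, $\omega_1$ is preserved, and every stationary subset of $\omega_1$ in $V[G]$ stays stationary. Consequently, stationarity of sets of order type $\omega_1$ in the relevant ordinal of cofinality $\omega_1$ is also preserved.

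In $M[G][H]$, consider the ordinal $\kappa$, which has cofinality $\omega_1$ there, and the coloring $j(c)$. For each $\alpha<\kappa$ we have $j(c)(\alpha,\kappa)\in\omega_1$. The aim is to find, in $M[G][H]$, a set $A\subseteq\kappa$ of order type $\omega_1$, stationary in $\kappa$, such that $j(c)''[A\cup\{\kappa\}]^2$ is countable. Then $M[G][H]\models\exists A\subseteq j(\kappa)$ with the required property for $j(c)$, and elementarity pulls back a witness $A\subseteq\omega_2$ for $c$ in $V[G]$. In fact it suffices to get $|j(c)''[A]^2|\le\aleph_0$; the extra point $\kappa$ is a tool, not a requirement. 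The paper's main theorem shows that at least $\aleph_0$ colors must appear, so we only need an upper bound.

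To build $A$, first fix a club $D\subseteq\kappa$ of order type $\omega_1$ in $M[G][H]$. Since $\kappa$ is regular in $V[G]$, restrictions $c\restriction[\gamma]^2$ for $\gamma<\kappa$ lie in intermediate models $V[G_\gamma]$. A Chang-type or stationary-reflection analysis then applies: each color $j(c)(\alpha,\kappa)$ for $\alpha\in D$ is a countable ordinal. By the pressing-down lemma on $\omega_1$ (via the order-type isomorphism of $D$ with $\omega_1$), we thin to a stationary $A_0\subseteq D$ on which $\alpha\mapsto j(c)(\alpha,\kappa)$ is bounded by some $\delta<\omega_1$; this is possible after coding colors so that regressive-ness holds, or more simply by splitting into $\omega_1$ pieces and picking a stationary one with a constant color. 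Next we control the pairs inside $A_0$. We use the fact that $j(c)\restriction[\kappa]^2=c$, and we intersect with an $\omega_1$-sequence of elementary submodels from $V[G]$ whose $\omega_1$-sized approximations, being $\omega_1$-closed, are absorbed. This lets us choose $A\subseteq A_0$ stationary with $c''[A]^2\subseteq\delta$, using a diagonal intersection indexed by $\omega_1$ and the fact that for each $\alpha$ the set of $\beta$ with $c(\alpha,\beta)\ge\delta$ can be avoided on a club.

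The main obstacle is this last step: ensuring all pairs inside $A$ get only countably many colors, not just pairs with top point $\kappa$. I would first try a stronger large cardinal hypothesis, say $\kappa$ being $\kappa^+$-supercompact or a huge-type reflection. This gives genericity of $j''\kappa$ or of a countably closed substructure, so that the $\omega_1$ many "top points" of $A$ can each play the role of $\kappa$ simultaneously. The model would then be the Levy collapse of such a cardinal, the standard model for the stationary Chang-type conjecture, and that principle yields $A$ directly.
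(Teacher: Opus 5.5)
There is a genuine gap, and your last paragraph essentially concedes it. In your main line the embedding comes from a normal measure after $\mathrm{Col}(\omega_1,<\kappa)$, so its critical point is $\omega_2^{V[G]}$. Then $j(\omega_1)=\omega_1$ and $j(c)\restriction[\kappa]^2=c$. Asking for $A\subseteq\kappa$ in $M[G][H]$ with $j(c)''[A]^2$ countable is therefore just the original problem for $c$, with a few more subsets of $\kappa$ available, and the embedding does nothing to shrink the set of colors.

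The steps meant to do the work also fail:
\begin{itemize}
\item A function from a stationary subset of $\omega_1$ into $\omega_1$ need not be constant, or even bounded, on any stationary set; an injective one is a counterexample. Normality of $\mathrm{NS}_{\omega_1}$ only handles regressive functions, and there is no general way to ``code'' $\alpha\mapsto j(c)(\alpha,\kappa)$ as a regressive function. Splitting into $\aleph_1$ pieces fails because $\mathrm{NS}_{\omega_1}$ is not $\aleph_2$-complete.
\item Nothing justifies that $\{\beta: c(\alpha,\beta)\geq\delta\}$ can be avoided on a club. This is not a ZFC fact (compare the $\square_{\omega_1}$ coloring mentioned after the proposition), so it could only come from a specific large-cardinal configuration, which you never produce.
\item A $\kappa^+$-supercompact collapsed the same way still gives critical point $\omega_2$, so it has the same defect.
\item Invoking ``the standard model for the stationary Chang-type conjecture'', which ``yields $A$ directly'', assumes what is to be proved.
\end{itemize}

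The missing idea is what the paper's appeal to Kunen's construction (via Garti--Zhang) supplies. You need an embedding with critical point $\omega_1$ that sends $\omega_1$ to $\omega_2$, and you should use the pointwise image of $\omega_2$ rather than reflection at the critical point.

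Start with a huge embedding $j:V\to M$ with $\mathrm{crit}(j)=\kappa$, $j(\kappa)=\lambda$ and ${}^{\lambda}M\subseteq M$. Force with Kunen's collapse so that $\kappa=\omega_1$ and $\lambda=\omega_2$ in $V[G]$, and $j$ lifts to $j:V[G]\to M[G^*]$ in a further generic extension. Given $c:[\omega_2]^2\to\omega_1$ in $V[G]$, let $X=j''\omega_2$, which lies in $M[G^*]$ by hugeness. Then:
\begin{itemize}
\item $\otp(X)=\lambda=\omega_1^{M[G^*]}$.
\item For $\alpha<\beta<\lambda$, $j(c)(j(\alpha),j(\beta))=j(c(\alpha,\beta))=c(\alpha,\beta)<\kappa$. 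So $j(c)''[X]^2\subseteq\kappa$, which is countable in $M[G^*]$.
\item $X$ is stationary in $\sup X$. Since $j$ is continuous at ordinals of countable cofinality, $X$ contains the image of $\omega_2\cap\cof(\omega)$ (computed in $V[G]$) under the increasing continuous map $\alpha\mapsto\sup j''\alpha$. Kunen's construction ensures this set remains stationary in $\lambda$.
\end{itemize}
Elementarity of $j$ then yields the required $A$ in $V[G]$. The compression of colors comes exactly from $\mathrm{crit}(j)=\omega_1$ and $j(\omega_1)=\omega_2$, which no embedding with critical point $\omega_2$ provides.

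Incidentally, the main theorem does not show that every $c$ takes at least $\aleph_0$ values on such an $A$; a constant $c$ takes one. So the proposition should be read as ``at most $\aleph_0$''.
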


\begin{proof}
This follows from a classical construction due to Kunen \cite{Kunen}. See \cite[Lemma 2.20]{GartiZhang} for a proof.
\end{proof}

The conclusion of Proposition \ref{prop: statChang} has large cardinal strength since by \cite{TodorcevicFirstOscillation}, $\square_{\omega_1}$ implies there exists a coloring $c: [\omega_2]^2\to \omega_1$, for any $A,B\subseteq \omega_2$ of order type $\omega_1$ that are stationary in $\sup A = \sup B$, it is the case that $c'' A \otimes B = \omega_1$.

\section{A consequence regarding set-systems}

Recall the following definitions from \cite{ErdosGalvinHajnal}.

\begin{definition}\label{defintion: set-systems}
$\mathcal{I}$ is a \emph{set-system with set of vertices $V$} if for any $A\in \mathcal{I}$, $A\subseteq V$ and $|A|\geq 2$.
\end{definition}

\begin{definition}\label{definition: chromatic}
Let $\mathcal{I}$ be a set-system with set of vertices $V$. The \emph{chromatic number} of $\mathcal{I}$ is the smallest cardinal $\kappa$ for which there is a function $f: V\to \kappa$ such that for any $\xi<\kappa$ and $A\in \mathcal{I}$, $A\not\subseteq f^{-1}(\{\xi\})$.
\end{definition}

\begin{definition}\label{definition: simchromatic}
Let $\{\mathcal{I}_{\nu}: \nu<\lambda\}$ be a collection of set-systems with the same set of vertices $V$. The system is said to have \emph{simultaneous chromatic number} $\kappa$ if $\kappa$ is the smallest cardinal such that there is a function $f: V\to \kappa$ such that for any $\xi<\kappa$, there is $\nu<\lambda$ such that for any $A\in \mathcal{I}_\nu$, $A\not\subseteq f^{-1}(\{\xi\})$.
\end{definition}

\begin{definition}
Let $\mathcal{I}$ be a set-system with set of vertices $V$. $P(\mathcal{I},\lambda,\kappa)$ is said to hold if there is a partition of $\mathcal{I}$ into a union of $\lambda$ disjoint set-systems $\mathcal{I}_{\nu}$ ($\nu<\lambda$) in such a way that this system has simultaneous chromatic number $\geq \kappa$.
\end{definition}

Erd\H{o}s, Galvin and Hajnal \cite{ErdosGalvinHajnal} showed that $\omega_1\not\to [stat(\omega_1)]^2_{\omega}$ implies that $P([\omega_1]^2, \aleph_0, \aleph_1)$ holds. Theorem \ref{theorem: main} gives rise to the validity of the following variant.

\begin{definition}
Let $\mathcal{I}$ be a set-system with set of vertices $V$ and let $V'\subseteq \mathcal{P}(V)$. $P(\mathcal{I},\lambda,\kappa, V')$ is said to hold if there is a partition of $\mathcal{I}$ into a union of $\lambda$ disjoint set-systems $\mathcal{I}_{\nu}$ ($\nu<\lambda$) in such a way that for any $A\in V'$, $\langle \mathcal{I}_\nu\restriction A: \nu <\lambda\rangle$ has simultaneous chromatic number $\geq \kappa$.
\end{definition}

\begin{corollary}\label{corollary: set-systems}
 $P([\kappa]^2, \aleph_0, \aleph_1, \kappa\cap \cof(\omega_1))$ holds for any cardinal $\kappa\geq \omega_2$.
\end{corollary}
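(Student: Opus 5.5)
We take the coloring $c\colon[\kappa]^2\to\omega$ given by Theorem \ref{theorem: main} and partition $[\kappa]^2$ according to its color classes. Set $\mathcal{I}_n=\{\{\alpha,\beta\}\in[\kappa]^2: c(\alpha,\beta)=n\}$ for $n\in\omega$. These are pairwise disjoint set-systems whose union is $[\kappa]^2$, so $\lambda=\aleph_0$ as required. We read the elements $\delta\in\kappa\cap\cof(\omega_1)$ as the sets of vertices $\delta=\{\alpha:\alpha<\delta\}$, and $\mathcal{I}_n\restriction\delta$ as $\mathcal{I}_n\cap[\delta]^2$. The hypothesis $\kappa\geq\omega_2$ only ensures that $\kappa\cap\cof(\omega_1)$ is nonempty; for $\kappa=\omega_1$ the statement would be vacuous.

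Fix $\delta<\kappa$ with $\cf(\delta)=\omega_1$. We must show that $\langle\mathcal{I}_n\restriction\delta:n<\omega\rangle$ does not have simultaneous chromatic number $\leq\aleph_0$. So fix $f\colon\delta\to\omega$; we will find $\xi<\omega$ such that every $\mathcal{I}_n\restriction\delta$ has a member contained in $f^{-1}(\{\xi\})$.

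\begin{enumerate}
\item Choose a club $E\subseteq\delta$ of order type $\omega_1$, with increasing continuous enumeration $\langle e_i:i<\omega_1\rangle$.
\item Let $S=\{e_i: i<\omega_1 \text{ a limit}\}$. Any club subset of $\delta$ meets $E$ in a club of $E$, and the limit points form a club in $\omega_1$. Hence every club of $\delta$ meets $S$, so $S$ is stationary in $\delta$.
\item Write $S=\bigcup_{\xi<\omega}(S\cap f^{-1}(\{\xi\}))$. Since the nonstationary ideal on $\delta$ is $\sigma$-complete (it is isomorphic via $E$ to that on $\omega_1$), some $\xi$ makes $A=S\cap f^{-1}(\{\xi\})$ stationary in $\delta$.
\item Then $A$ is unbounded in $\delta$, so $\sup A=\delta$. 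Being an unbounded subset of $E$, it has order type exactly $\omega_1$.
\item Apply Theorem \ref{theorem: main} with both sets equal to $A$. This gives $c''A\otimes A=\omega$.
\end{enumerate}

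Hence for every $n$ there are $\alpha<\beta$ in $A\subseteq f^{-1}(\{\xi\})\cap\delta$ with $\{\alpha,\beta\}\in\mathcal{I}_n\restriction\delta$. So the color $\xi$ is not separated by any $\mathcal{I}_n$, no $f\colon\delta\to\omega$ witnesses simultaneous chromatic number $\leq\aleph_0$, and the simultaneous chromatic number on $\delta$ is $\geq\aleph_1$.

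The only step needing care is step 2 together with step 4: producing, inside an arbitrary $\delta$ of cofinality $\omega_1$, a subset that both has order type exactly $\omega_1$ and is stationary in $\delta$. Choosing $E$ of order type $\omega_1$ settles both at once. Everything else is the pigeonhole argument of step 3 followed by a direct appeal to Theorem \ref{theorem: main}.
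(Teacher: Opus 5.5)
Your proof is correct and follows the paper's argument. You partition $[\kappa]^2$ into the color classes of the coloring from Theorem \ref{theorem: main}, find for each $f\colon\delta\to\omega$ a monochromatic stationary $A\subseteq\delta$ of order type $\omega_1$ by pigeonhole, and apply the theorem with $A=B$; your extra details (the club $E$ of order type $\omega_1$ and the $\sigma$-completeness of the nonstationary ideal on $\delta$) are ones the paper leaves implicit.
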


\begin{proof}
Let $c: [\kappa]^2\to \omega$ be given by Theorem \ref{theorem: main}. For each $n\in \omega$, let $\mathcal{I}_n = c^{-1}(\{n\})$. For any $\gamma\in \kappa\cap \cof(\omega_1)$ and for any $f: \gamma\to \omega$, there is some $m\in \omega$ and $A\subseteq \gamma$ of order type $\omega_1$ that is stationary in $\gamma$ such that $f'' A =\{m\}$. By the property of $c$, for all $n\in \omega$, there are $\alpha<\beta\in A$ such that $c(\alpha,\beta)=n$, namely, $(\alpha,\beta)\in \mathcal{I}_n \restriction \gamma$. Hence, $\langle \mathcal{I}_n\restriction \gamma: n\in \omega\rangle$ has simultaneous chromatic number $\geq \aleph_1$, as desired.
\end{proof}

\section{Open questions}\label{section: questions}

\begin{question}
For an uncountable cardinal $\kappa$, is there a coloring $c: [\kappa]^2\to \omega_1$ such that  $c''[A]^2 = \omega_1$ for any \emph{closed} $A\subseteq \kappa$ of order type $\omega_1$?
\end{question}

The following weaker conclusion is also of interest. 

\begin{question}
For a regular uncountable cardinal $\kappa$, is there a coloring $c: [\kappa]^2\to \omega_1$ such that $|c''[A]^2| = \aleph_1$ for any \emph{closed} $A\subseteq \kappa$ of order type $\omega_1$?
\end{question}

\bibliographystyle{alpha}
\bibliography{bib}

\end{document}